\newtheorem{theorem}{Theorem}[section]
\newtheorem{claim}[theorem]{Claim}
\newtheorem{proposition}[theorem]{Proposition}
\newtheorem{corollary}[theorem]{Corollary}
\newtheorem{remark}[theorem]{Remark}
\newtheorem*{notat*}{Notation}
\newcommand{\Bin}{\ensuremath{\textrm{Bin}}}
\def\eps{\varepsilon}
\newcommand\G{\mathcal{G}}
\title{Rainbow Hamilton cycles in randomly coloured\\ randomly perturbed dense graphs}
\author{Elad Aigner-Horev \thanks{Department of Computer Science, Ariel University, Ariel 40700, Israel. Email: {\tt horev@ariel.ac.il}.}
\quad Dan Hefetz \thanks{Department of Computer Science, Ariel University, Ariel 40700, Israel. Email: {\tt danhe@ariel.ac.il}. Research supported by ISF grant 822/18.}}
\begin{document}
\maketitle

\begin{abstract}
Given an $n$-vertex graph $G$ with minimum degree at least $d n$ for some fixed $d > 0$, the distribution $G \cup \mathbb{G}(n,p)$ over the supergraphs of $G$ is referred to as a (random) {\sl perturbation} of $G$. We consider the distribution of edge-coloured graphs arising from assigning each edge of the random perturbation $G \cup \mathbb{G}(n,p)$ a colour, chosen independently and uniformly at random from a set of colours of size $r := r(n)$. We prove that such edge-coloured graph distributions a.a.s. admit rainbow Hamilton cycles whenever the edge-density of the random perturbation satisfies $p := p(n) \geq C/n$, for some fixed $C > 0$, and $r = (1 + o(1))n$. The number of colours used is clearly asymptotically best possible. In particular, this improves upon a recent result of Anastos and Frieze (2019) in this regard. As an intermediate result, which may be of independent interest, we prove that randomly edge-coloured sparse pseudo-random graphs a.a.s. admit an almost spanning rainbow path. 
\end{abstract}

\section{Introduction} 

A classical result of Dirac~\cite{Dirac} asserts that every $n$-vertex graph $G$ (with at least three vertices) satisfying $\delta(G) \geq n/2$ is Hamiltonian. Moreover, Dirac's result is optimal as far as the constant $1/2$ appearing in the condition on $\delta(G)$ is concerned. 

Let $\G_{d,n}$ denote the set of $n$-vertex graphs with minimum degree at least $dn$ for some constant $d > 0$. As noted above, for every $d \in (0, 1/2)$, there are non-Hamiltonian graphs $G \in \G_{d,n}$. Nevertheless, Bohman, Frieze, and Martin~\cite{BFM03} discovered that once {\sl slightly} randomly perturbed (i.e. smoothed), the members of $\G_{d,n}$ almost surely give rise to Hamiltonian graphs. In particular, they proved that for every $d > 0$ there exists a $C := C(d)$ such that $G \cup \mathbb{G}(n,p)$ is a.a.s. Hamiltonian whenever $G \in \G_{d,n}$ and $p := p(n) \geq C/n$, undershooting the threshold for Hamiltonicity in $\mathbb{G}(n,p)$ by a logarithmic factor. Numerous results regarding spanning configurations in members of the graph distribution $\G_{d,n} \cup \mathbb{G}(n,p)$ (and its hypergraph analogue) have since appeared; see, e.g.,~\cite{BTW17, BHKM18, BFKM04, BHKMPP18, BMPP18, DRRS18, HMMMO, HZ18, KKS16, KKS17, MM18}.    

Define $\mathscr{G}(d,n,p,r)$ to be the distribution of edge-coloured supergraphs of members of $\G_{d,n}$ defined as follows. Fix $G \in \G_{d,n}$, perturb $G$ using $\mathbb{G}(n,p)$, and colour the edges of the perturbation $G \cup \mathbb{G}(n,p)$ by assigning each edge a colour chosen independently and uniformly at random from the set $[r]$.   

Anastos and Frieze~\cite{AF} proved that for every $d > 0$ there exists a $C := C(d)$ such that if $p := p(n) \geq C/n$ and $r > (120 - 20 \ln d) n$, then a.a.s. $\mathscr{G}(d,n,p,r)$ admits a {\sl rainbow}\footnote{A Hamilton cycle whose edges are coloured using $n$ distinct colours.} Hamilton cycle.
The corresponding problem for random graphs was extensively studied. In particular, improving earlier results, Frieze and Loh~\cite{FL} proved that randomly colouring the edges of $\mathbb{G}(n, (1 + o(1)) \ln n/n)$ with $(1 + o(1)) n$ colours a.a.s. yields a rainbow Hamilton cycle. Both the edge-density of the random graph and the number of colours asserted by their result are clearly asymptotically best possible; nevertheless, both were refined. Ferber and Krivelevich~\cite{FK} improved the result of Frieze and Loh by replacing $p = (1 + o(1)) \ln n/n$ with the optimal $p = (\ln n + \ln \ln n + \omega(1))/n$. Bal and Frieze~\cite{BF16} proved that if $p = \omega(\ln n/n)$, then precisely $n$ colours suffice. Ferber~\cite{F15} improved the latter result to $p = \Omega(\ln n/n)$.    

Our main result asserts that in the perturbed setting with random perturbations of edge-density $C/n$, a set of colours of size $(1 + o(1)) n$ suffices in order to yield a rainbow Hamilton cycle asymptotically almost surely. This improves upon the aforementioned result of Anastos and Frieze~\cite{AF} in terms of the number of colours used, which is clearly asymptotically best possible.  

\begin{theorem} \label{th::rainbowHAM}
For every $d, \alpha > 0$ there exists a constant $C := C(d, \alpha)$ such that whenever $p := p(n) \geq C/n$ and $r = (1 + \alpha) n$, the distribution of edge-coloured graphs $\mathscr{G}(d,n,p,r)$ a.a.s. admits a rainbow Hamilton cycle.  
\end{theorem}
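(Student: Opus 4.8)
The plan is to let the randomness of the perturbation build a long rainbow path, to let the minimum degree of $G$ absorb the few leftover vertices and close the path into a cycle, and to let the surplus of $\alpha n$ colours supply the ``fresh'' colours that the absorbing and closing steps consume. Fix a large constant $C=C(d,\alpha)$ and expose $\mathbb{G}(n,p)$ as the union of two independent binomial random graphs $R_1\cup R_2$ with $R_1=\mathbb{G}(n,C_1/n)$ and $R_2=\mathbb{G}(n,C_2/n)$, where $R_1$ will drive the long path and $R_2$ the closing. I first reserve a set $Q\subseteq[r]$ of $\tfrac{\alpha}{2}n$ colours and delete from the perturbation every edge whose (independent, uniform) colour lies in $Q$. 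This removes a constant fraction of the edges, leaving a randomly coloured graph on the palette $[r]\setminus Q$ of size $(1+\tfrac{\alpha}{2})n$, while a.a.s.\ preserving both the pseudo-random properties of $R_1$ and the minimum degree of $G$ up to a constant factor.

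Next I would produce a long rainbow path on the palette $[r]\setminus Q$. For $C_1$ large the giant component of $R_1$ is a sparse pseudo-random (constant average degree, expanding) graph covering all but an $\eps(C_1)n$-fraction of the vertices, where $\eps(C_1)\to 0$ as $C_1\to\infty$. Applying the intermediate result to this randomly coloured sparse expander yields a rainbow path $P$ using colours from $[r]\setminus Q$ and covering all but a set $U$ of size $(\eps(C_1)+o(1))n$; choosing $C_1$ large enough that $\eps(C_1)\le\alpha/8$ guarantees $|U|\le\tfrac{\alpha}{4}n$. Note that $P$ uses at most $n-1$ colours, all available since $[r]\setminus Q$ has size $(1+\tfrac{\alpha}{2})n\ge n$.

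It remains to absorb $U$ into $P$ and close the result into a cycle, and this is where the minimum degree of $G$ enters. For each $u\in U$ the $\ge dn$ neighbours of $u$ in $G$ are the endpoints of $\Omega(n)$ distinct edges $xy$ of $P$ with $x,y\in N_G(u)$; since the two potential connecting edges $xu,uy$ are edges of $G$ carrying independent uniform colours, a.a.s.\ some such candidate supplies two distinct colours from the still-unused reserve $Q$, and replacing $xy$ by the rainbow segment $x\,u\,y$ inserts $u$. Processing the vertices of $U$ along disjoint path-edges and using distinct colours from $Q$ at each step (we need $2|U|\le\tfrac{\alpha}{2}n=|Q|$ such colours) turns $P$ into a spanning rainbow path, after which one final P\'osa-type rotation--extension step, using an edge of $G\cup R_2$ between the endpoints coloured from the colours of $Q$ that remain, closes it into a rainbow Hamilton cycle.

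The hard part will be the intermediate result --- the rainbow almost spanning path in the randomly coloured sparse expander --- because the colour budget is a genuinely global constraint. Once the path has grown to length $(1-o(1))n$ essentially the whole palette has been consumed, so each further extension or rotation must land simultaneously on an edge of the sparse host and on one of the few remaining fresh colours. I expect to control this by a rainbow rotation--extension argument: expansion of the sparse pseudo-random graph guarantees that $\Omega(n)$ distinct vertices arise as endpoints of rainbow paths obtained by rotations, while the independent uniform colouring guarantees that a constant fraction of the host-edges leaving this endpoint set carry a currently unused colour. Careful bookkeeping of the used colours against the degree and expansion bounds should then show that a legal rainbow extension exists at all but an $o(n)$-fraction of the steps, which is exactly what an almost spanning rainbow path requires.
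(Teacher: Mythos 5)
Your high-level architecture matches the paper's: split the perturbation into two rounds, use the first round to build an almost-spanning rainbow path, use the dense seed $G$ to absorb the leftover vertices, and use the second round to close the cycle. However, there are two genuine gaps, one in the absorption step and one in the intermediate result you defer.

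The absorption step as written fails. You claim that the $\geq dn$ neighbours of $u\in U$ in $G$ give $\Omega(n)$ edges $xy$ of $P$ with \emph{both} endpoints in $N_G(u)$. Minimum degree alone does not give this: a set of $dn$ vertices on a path of length $(1-\varepsilon)n$ can easily contain no two consecutive path vertices when $d<1/2$, so the number of candidate path-edges could be zero. What saves the day is that $P$ lives in $R_1$, which is independent of $G$, so $P$ is ``randomly placed'' relative to $N_G(u)$ --- but this needs to be made rigorous (a union bound over all possible paths is hopeless). The paper does this via a \emph{randomness shift} argument: generate $R_1$ as a fixed random graph composed with a uniformly random permutation of $[n]$, so that the images of the path vertices are a random sequence, and then apply concentration (hypergeometric Chernoff) to show that for every pair $u,v$ there are $\Omega(d^3 n)$ positions where a path vertex lies in $N_G(u)$ and both its path-neighbours lie in $N_G(v)$. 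Some argument of this kind is indispensable and is entirely missing from your proposal. (Your reserved-palette $Q$ is a clean way to handle colour freshness, and is arguably simpler than the paper's bookkeeping of exposed colours, but it does not address this structural issue.) Relatedly, your closing step is under-specified: a ``single P\'osa-type rotation--extension step'' presupposes an edge between, or reachable from, the two specific endpoints of the spanning path, which is not guaranteed; the paper instead arranges linear-sized sets of pivot vertices near each end and closes with an $R_2$-edge between them, again relying on the randomness shift to make those sets large.

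The second gap is the intermediate result itself, which you correctly identify as the hard part but do not prove. Your sketched rainbow rotation--extension argument is precisely where the difficulty concentrates: once the path has length $(1-o(1))n$, almost the entire palette is used, and it is not clear that the ``careful bookkeeping'' closes. The paper avoids rotations altogether via a rainbow depth-first search: the key observation is that the union of all DFS stacks over the whole execution is a forest, so the set of colours appearing on edges between the explored set $S$ and the unexplored set $T$ always has size at most $n-1$; hence if every pair of disjoint $k$-sets spans at least $n$ colours (which holds a.a.s.\ for a random colouring of a jumbled graph with $(1+\alpha)n$ colours), the DFS stack must at some moment contain $n-2k+2$ vertices, and the stack spans a rainbow path. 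Without either this argument or a completed rotation--extension analysis, the proposal does not yet constitute a proof.
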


\begin{remark}	
It would be interesting to know whether Theorem~\ref{th::rainbowHAM} can be extended to handle $(1 + \alpha) n$ colours, where $\alpha := \alpha(n) = o(1)$ (in a meaningful manner) while $C$ remains fixed. Our current proof does not allow this. In particular, it would be interesting to know the smallest value of $C = C(n)$ for which the result holds with $\alpha = 0$. The aforementioned result of Ferber~\cite{F15} implies that $C = \Omega(\ln n)$ suffices.   
\end{remark}

\section{Almost spanning rainbow paths in sparse pseudorandom graphs}

\subsection{Rainbow DFS}

In this section, we put forth an adaptation of the well-known DFS algorithm, to which we refer as {\em rainbow DFS} (RDFS, hereafter), fit for edge-coloured graphs.  We then employ RDFS in order to produce "long" rainbow paths. In particular, the main result of this section is Proposition~\ref{prop::longRainbowPath}, stated below, which can be viewed as a rainbow version of~\cite[Proposition 2.2]{Kriv}.

\medskip
\noindent
{\bf RDFS algorithm.} The input for the RDFS algorithm consists of a graph $G$ with vertex-set $[n]$, an edge-colouring $\psi : E(G) \to \mathbb{N}$ of $G$, and a permutation $\pi \in S_n$. During its execution, the algorithm maintains three sets of vertices, namely $S$, $T$ and $U$, as well as a set of colours denoted by $A_U$.  The set $S$ consists of all vertices of $G$ whose exploration is complete; the set $T$ consists of all vertices of $G$ that were not yet visited; finally, $U := [n] \setminus (S \cup T)$. The members of $U$ are kept in a stack. Given $U = \{u_1, u_2, \ldots, u_t\}$, we maintain the convention that for every $1 \leq i < j \leq t$ the vertex $u_i$ is pushed into $U$ prior to $u_j$. The set of colours $A_U$ is given by  
$$
A_U = \{\psi(u_i u_{i+1}) : 1 \leq i \leq t-1\}.
$$ 
Initially $S = U = A_U = \emptyset$ and $T = [n]$; RDFS proceeds in rounds until $T = U = \emptyset$ and $S = [n]$. In rounds for which $U = \emptyset$, RDFS chooses the first vertex in $T$ (according to $\pi$), deletes it from $T$, and pushes it into $U$. In each round for which $U \neq \emptyset$, the following actions are performed. Let $U = \{u_1, u_2, \ldots, u_t\}$ and $T = \{w_1, \ldots, w_m\}$, where the elements of $T$ are ordered according to $\pi$ (that is, $\pi(w_i) < \pi(w_j)$ if and only if $i < j$). If there exists an $i \in [m]$ such that $u_t w_i \in E(G)$ and $\psi(u_t w_i) \notin A_U$, then for the smallest such $i$ the vertex $w_i$ is deleted from $T$ and pushed into $U$. If no such $i$ is found, then $u_t$ is popped out of $U$ and added to $S$.

The following properties are maintained by RDFS. 
\begin{itemize}
\item [(D1)] In every round of the algorithm, either one vertex is moved from $T$ to $U$ or one vertex is moved from $U$ to $S$.

\item [(D2)] $|\{\psi(e) : e \in E_G(S, T)\}| \leq n-1$ holds at any point during the execution of the algorithm.

\item [(D3)] If $U = \{u_1, \ldots, u_t\}$, then $u_1 u_2 \ldots u_t$ is a path in $G$ which is rainbow under $\psi$ (in particular, $A_U$ is well-defined). 
\end{itemize}           

While Properties (D1) and (D3) are immediate, Property (D2) merits a brief explanation. Consider an arbitrary $s \in S$ at the point in time where $s$ is moved from $U$ into $S$, every edge connecting $s$ to $T$ must have its colour present in $A_U$ (where here $S, T, U$ and $A_U$ are the "snapshots" of these sets corresponding to the moment in time under examination). Hence, the set of colours $\{\psi(e) : e \in E_G(S, T)\}$ is a subset of the union of the sets $A_U$ taken over all rounds of the execution of the algorithm. Since (just like in the usual DFS algorithm) all sets $U$ span paths whose union over all rounds of the execution of the algorithm is a forest, the aforementioned union of the sets $A_U$ is of size at most $n-1$.

We are ready to state the main result of this section.

\begin{proposition} \label{prop::longRainbowPath}
Let $k < n$ be positive integers and let $B$ be a set of at least $n$ colours. Let $G$ be a graph with vertex-set $[n]$ and let $\psi : E(G) \to B$. If $|\{\psi(e) : e \in E_G(X,Y)\} | \geq n$ holds for every pair of disjoint sets $X, Y \subseteq V(G)$ of size $|X| = |Y| = k$, then $G$ admits a path of length $n - 2k$ which is rainbow under $\psi$.   
\end{proposition}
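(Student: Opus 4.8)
The plan is to execute the RDFS algorithm on $G$ with the colouring $\psi$ and an arbitrary permutation $\pi$, and to extract the required rainbow path from the stack $U$ at a suitably chosen round, relying solely on Properties (D1)--(D3). We may assume $n > 2k$, since otherwise the asserted path has nonpositive length and there is nothing to prove. The first step is to control the sizes of $S$ and $T$ during the run. By (D1), in each round either one vertex moves from $T$ to $U$ (so that $|T|$ decreases by one) or one vertex moves from $U$ to $S$ (so that $|S|$ increases by one); in both cases the quantity $|S| - |T|$ increases by exactly one. As $|S| - |T| = -n$ at the start and $|S| - |T| = n$ at termination, and it changes by precisely one each round, there is a round at which $|S| = |T|$. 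Fix the snapshots of $S$, $T$ and $U$ at such a round and write $j := |S| = |T|$.

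Next I would bound $j$ using the density hypothesis together with (D2). Suppose for contradiction that $j \ge k$. Then I may select $X \subseteq S$ and $Y \subseteq T$ with $|X| = |Y| = k$, and these are disjoint since $S \cap T = \emptyset$. The hypothesis of the proposition gives $|\{\psi(e) : e \in E_G(X,Y)\}| \ge n$, while $E_G(X,Y) \subseteq E_G(S,T)$ forces $|\{\psi(e) : e \in E_G(S,T)\}| \ge n$, contradicting (D2). Hence $j \le k - 1$.

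It remains to read off the path. At the fixed round we have $|U| = n - |S| - |T| = n - 2j \ge n - 2k + 2$, and by (D3) the vertices of $U$, taken in stack order, span a path that is rainbow under $\psi$. This path has length $|U| - 1 \ge n - 2k + 1$, so truncating it to any subpath of length exactly $n - 2k$ yields the desired rainbow path.

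I expect the crux to lie entirely in the coupling between the density hypothesis and (D2): the invariant $|\{\psi(e) : e \in E_G(S,T)\}| \le n - 1$ maintained by RDFS is exactly what the hypothesis rules out once both $S$ and $T$ reach size $k$, and this is what pins the balanced cut below $k$ and thereby makes the explored rainbow path long. The only points demanding care are purely structural rather than computational: checking that the monotone increments in (D1) genuinely produce a round with $|S| = |T|$, and ensuring that the set $E_G(S,T)$ appearing in (D2) and the sets $X, Y$ drawn from the hypothesis all refer to the same frozen moment of the execution.
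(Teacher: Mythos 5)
Your proposal is correct and follows essentially the same argument as the paper's proof: run RDFS, locate the round with $|S| = |T|$ via (D1), use (D2) together with the colour-density hypothesis to force $|S| = |T| \leq k-1$, and read off the rainbow path from $U$ via (D3). The extra details you supply (the parity/monotonicity of $|S|-|T|$ and the explicit contradiction with $X \subseteq S$, $Y \subseteq T$) are exactly the steps the paper leaves implicit.
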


\begin{proof}
Run RDFS over $G$, $\psi$, and an arbitrary permutation $\pi \in S_n$. Consider the moment in time during the execution of the algorithm at which $|S| = |T|$; such a moment must exist by Property~(D1). Since $|\{\psi(e) : e \in E_G(S,T)\}| \leq n-1$ holds by Property (D2), it follows, by assumption, that $|S| = |T| \leq k-1$, implying that $|U| \geq n - 2k + 2$. The proof is now complete since $U$ spans a path in $G$ which is rainbow under $\psi$ by Property (D3).       
\end{proof}

\subsection{Long rainbow paths in jumbled graphs}

An $n$-vertex graph $G$ is said to be $(p,\beta)$-{\em jumbled} if 
$$
\big|e_G(X,Y) - p|X||Y| \big| \leq \beta \sqrt{|X||Y|}
$$
holds for every $X,Y \subseteq V(G)$. The canonical examples of such graphs are the so-called $(n,d,\lambda)$-{\em graphs} (see, e.g.,~\cite{KS06}) and random graphs. 
The latter, for instance, satisfy $\beta = \Theta(\sqrt{pn})$ asymptotically almost surely. More generally, $\beta \geq \sqrt{pn}$ is compelled~\cite{KS06}. Below we assume that $\beta \leq pn/D$, for some constant $D$, which in turn imposes a lower bound on $p$. Indeed, $\sqrt{pn} \leq \beta \leq pn/D$ implies that $p \geq D^2/n$.
The reason we require such a lower bound on $p$ will become apparent towards the end of this section.   

The following result asserts that randomly edge-coloured pseudorandom graphs, satisfying a fairly mild discrepancy condition, a.a.s. admit almost-spanning rainbow paths.  

\begin{theorem} \label{th::longRainbowPathPseudo}
For every $\alpha>0$ and $\eps >0 $ there exists a constant $D = D(\alpha, \eps)$ such that the following holds whenever $\beta := \beta(n) \leq pn/D$ and $n$ is sufficiently large. Let $G$ be an $n$-vertex $(p,\beta)$-jumbled graph and let $\psi$ be an edge-colouring of $G$ assigning every edge of $G$ a colour from $[(1 + \alpha) n]$, chosen independently and uniformly at random. Then a.a.s. $G$ admits a path of length $(1 - \eps) n$ which is rainbow under $\psi$.  
\end{theorem}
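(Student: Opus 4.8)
The plan is to derive Theorem~\ref{th::longRainbowPathPseudo} from the deterministic Proposition~\ref{prop::longRainbowPath}. Set $k := \lfloor \eps n /2 \rfloor$, so that a path of length $n - 2k \geq (1-\eps)n$ would follow from Proposition~\ref{prop::longRainbowPath} (applied with the colour set $B = [(1+\alpha)n]$, which contains at least $n$ colours) as soon as we verify its hypothesis: that a.a.s.
$$
|\{\psi(e) : e \in E_G(X,Y)\}| \geq n
$$
for \emph{every} pair of disjoint sets $X,Y \subseteq V(G)$ with $|X| = |Y| = k$. The whole task thus reduces to establishing this single discrepancy-type event, with all randomness coming from the colouring $\psi$ (the graph $G$ being fixed).

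First I would use jumbledness to guarantee many edges between any such $X$ and $Y$. Since $\beta \leq pn/D$ and $|X| = |Y| = k = \lfloor \eps n/2\rfloor$, the jumbledness inequality gives
$$
e_G(X,Y) \geq p k^2 - \beta k \geq p k \lf(k - n/D\rt) \geq c\, p \eps^2 n^2
$$
for an absolute constant $c > 0$, provided $D \geq 4/\eps$ and $n$ is large; invoking the compelled bound $p \geq D^2/n$ then yields $e_G(X,Y) \geq m_0 := c\, D^2 \eps^2 n$. The point is that $m_0$ is a multiple of $n$ whose constant grows with $D$, hence can be made as large as we please.

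Next, fix one pair $(X,Y)$ and bound the probability that fewer than $n$ colours appear on its $m := e_G(X,Y) \geq m_0$ edges. Equivalently, at least $\alpha n$ of the $(1+\alpha)n$ colours are \emph{absent} from $E_G(X,Y)$. For a fixed set $Q$ of $\lceil \alpha n\rceil$ colours, each edge independently receives a colour outside $Q$ with probability at most $1 - \tfrac{\alpha n}{(1+\alpha)n} = 1/(1+\alpha)$, so all $m$ edges avoid $Q$ with probability at most $(1+\alpha)^{-m_0}$; a union bound over the at most $\binom{(1+\alpha)n}{\alpha n} \leq (e(1+\alpha)/\alpha)^{\alpha n}$ choices of $Q$ controls the per-pair failure probability. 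Then I would union bound over the at most $\binom{n}{k}^2 \leq (2e/\eps)^{\eps n}$ pairs $(X,Y)$, so that the total failure probability is at most $\exp\big(n\, F(\alpha,\eps,D)\big)$ with
$$
F(\alpha,\eps,D) = \eps \ln \tfrac{2e}{\eps} + \alpha \ln \tfrac{e(1+\alpha)}{\alpha} - c\, D^2 \eps^2 \ln(1+\alpha).
$$
Choosing $D = D(\alpha,\eps)$ large enough makes $F < 0$, whence the bad event has probability $o(1)$; this is exactly where the hypothesis $\beta \leq pn/D$ (a sufficiently large $D$) is consumed.

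The main obstacle is this final balancing act: the union runs over $e^{\Theta(\eps n)}$ pairs, so the per-pair estimate must decay exponentially in $n$ \emph{uniformly in} $p$. A naive appeal to the bounded-differences inequality, treating the number of realised colours as a $1$-Lipschitz function of the $m$ edge-colours, only yields a bound of the form $\exp(-\Omega(n^2/m))$, which degenerates to a constant when $p$, and hence $m = \Theta(p n^2)$, is large. The union-over-$Q$ estimate sketched above instead decays like $\exp(-\Omega(m))$, and since $m \geq m_0 = \Omega(D^2 n)$ this decay can be pushed past the entropy of the pairs simply by enlarging $D$; equivalently, one may view this as a Chernoff-type bound for the number of empty bins in a balls-in-bins process, whose occupancy indicators are negatively associated.
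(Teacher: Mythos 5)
Your proposal is correct and follows essentially the same route as the paper: reduce to Proposition~\ref{prop::longRainbowPath} with $k = \eps n/2$, reformulate ``fewer than $n$ colours appear'' as ``some set of $\alpha n$ colours is entirely missed,'' lower-bound $e_G(X,Y)$ via jumbledness and the forced inequality $p \geq D^2/n$, and close with a union bound over colour sets and vertex-set pairs, taking $D$ large enough to beat the entropy terms. The paper's calculation is the same balancing act you describe, so there is nothing further to add.
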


\begin{proof}
Given $\alpha$ and $\eps$, set $D \gg \alpha^{-1}, \eps^{-1}$. By Proposition~\ref{prop::longRainbowPath} with $k = \eps n/2$ (which we assume is an integer) and the set of colours $B = [(1 + \alpha) n]$, it suffices to show that a.a.s. $|\{\psi(e) : e \in E_G(X,Y)\}| \geq n$ holds for every pair of disjoint sets $X, Y \subseteq V(G)$ of size $|X| = |Y| = k$. In order to do so, we prove that for every subset $A \subseteq [(1 + \alpha) n]$ of size $\alpha n$ (which we assume is an integer) and every pair of disjoint sets $X, Y \subseteq V(G)$ of size $|X| = |Y| = k$ it holds that $\{\psi(e) : e \in E_G(X,Y)\} \cap A \neq \emptyset$. 

By the jumbledness condition put on $G$, 
$$
e_G(X,Y) \geq p|X||Y| - \beta \sqrt{|X||Y|} = p k^2 -\beta k 
$$
holds for every pair of disjoint set $X,Y \subseteq V(G)$ of size $k$ each. 
In particular, for $k = \eps n/2$, we have that 
$$
p k^2 - \beta k \geq p \eps^2 n^2/4 - p \eps n^2/ (2D) \geq p \eps^2n^2/8,
$$
where the last inequality holds by our assumption that $D \gg \eps^{-1}$. 
Then, the probability that $\{\psi(e) : e \in E_G(X,Y)\} \cap A = \emptyset$ holds  
for any such pair $X,Y$ and set of colours $A$ (of size $\alpha n$) is bounded from above by 
\begin{align*}
\binom{(1 + \alpha) n}{\alpha n} \binom{n}{k}^2 \left(1 - \frac{\alpha n}{(1 + \alpha) n} \right)^{p \eps^2n^2/8} &\leq \left(\frac{e (1 + \alpha)}{\alpha} \right)^{\alpha n} \left(\frac{2 e}{\eps} \right)^{\eps n} e^{- \frac{\alpha}{8(1 + \alpha)} \cdot p\eps^2n^2} \\
& \leq \exp \left\{ 2\ln(\alpha^{-1}) \alpha n + 2 \ln (\eps^{-1}) \eps n - \alpha p\eps^2n^2/16\right\}\\ 
& = o(1).  
\end{align*}
The last equality holds since (as noted in the paragraph preceding the statement of Theorem~\ref{th::longRainbowPathPseudo}) the assumed upper bound on $\beta$ implies that $p \geq D^2/n$, and $D$ is sufficiently large with respect to $\alpha^{-1}$ and $\varepsilon^{-1}$.
\end{proof}

We conclude this section with the following direct consequence of Theorem~\ref{th::longRainbowPathPseudo} which is a rainbow version of a classical result of Ajtai, Koml\'os and Szemer\'edi~\cite{AKSz} and independently of Fernandez de la Vega~\cite{Vega}.

\begin{corollary} \label{cor::longRainbowPathGnp}
For positive constants $\alpha$ and $\eps$ there exists a constant $K = K(\alpha, \varepsilon)$ such that the following holds. Let $G \sim \mathbb{G}(n, K/n)$ and let $\psi$ be a colouring assigning every edge of $G$ a colour from $[(1 + \alpha) n]$, chosen independently and uniformly at random. Then a.a.s. $G$ admits a path of length $(1 - \eps) n$ which is rainbow under $\psi$.  
\end{corollary}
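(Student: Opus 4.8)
The plan is to derive Corollary~\ref{cor::longRainbowPathGnp} directly from Theorem~\ref{th::longRainbowPathPseudo} by verifying that a random graph $G \sim \mathbb{G}(n, K/n)$ is a.a.s.\ $(p, \beta)$-jumbled with parameters $p = K/n$ and $\beta$ small enough to satisfy the hypothesis $\beta \leq pn/D$ of the theorem. Concretely, given $\alpha$ and $\eps$, I first invoke Theorem~\ref{th::longRainbowPathPseudo} to obtain the constant $D = D(\alpha, \eps)$, and then I choose $K = K(\alpha, \eps)$ sufficiently large relative to $D$; the precise relation will fall out of the jumbledness estimate below. Once $G$ is shown to be jumbled with the required parameters, the conclusion (an almost-spanning rainbow path of length $(1 - \eps)n$) follows immediately, since the random colouring hypothesis in the corollary is identical to that of the theorem.

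The main content is therefore the jumbledness estimate. I would argue that if $G \sim \mathbb{G}(n, p)$ with $p = K/n$, then a.a.s.\ $G$ is $(p, \beta)$-jumbled with $\beta = \Theta(\sqrt{pn}) = \Theta(\sqrt{K})$; this is the well-known fact recorded in the text just before Theorem~\ref{th::longRainbowPathPseudo}. For each fixed pair $X, Y \subseteq V(G)$, the quantity $e_G(X, Y)$ is a sum of independent indicator variables (or twice such a sum, handling the overlap $X \cap Y$ appropriately), so a Chernoff-type concentration bound gives
$$
\P\big[\,|e_G(X,Y) - p|X||Y|\,| > \beta \sqrt{|X||Y|}\,\big] \leq 2 \exp\!\left\{-\frac{\beta^2}{3p}\right\},
$$
using that $|X||Y| \geq 1$ in the relevant regime. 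Taking a union bound over all at most $4^n$ pairs $(X, Y)$ of subsets of $[n]$, it suffices that $\beta^2/p$ exceed a suitable constant multiple of $n$; since $\beta = c\sqrt{pn}$ yields $\beta^2/p = c^2 n$, choosing the constant $c$ large enough makes the union bound $o(1)$. This establishes that a.a.s.\ $G$ is $(p, c\sqrt{pn})$-jumbled.

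It then remains to check that this value of $\beta$ meets the hypothesis $\beta \leq pn/D$. With $p = K/n$ we have $\beta = c\sqrt{pn} = c\sqrt{K}$ and $pn/D = K/D$, so the required inequality $c\sqrt{K} \leq K/D$ is equivalent to $\sqrt{K} \geq cD$, i.e.\ $K \geq c^2 D^2$. Thus choosing $K = K(\alpha, \eps)$ larger than $c^2 D^2$ guarantees the jumbledness parameter is small enough, and Theorem~\ref{th::longRainbowPathPseudo} applies verbatim to complete the proof. I do not anticipate a serious obstacle here: the only mild subtlety is bookkeeping in the Chernoff bound when $X$ and $Y$ overlap (one should either restrict to the stated regime or note that the diagonal contributes a lower-order term absorbed into $\beta$), and ensuring the two inequalities on $K$—one from the union bound forcing $c$ large, one from the jumbledness hypothesis forcing $K \geq c^2 D^2$—are compatible, which they are since $c$ depends only on absolute constants while $D$ depends on $\alpha, \eps$.
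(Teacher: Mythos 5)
Your overall route is the paper's intended one: the paper disposes of this corollary in a one-line remark saying that $\mathbb{G}(n,K/n)$ a.a.s.\ satisfies the discrepancy condition of Theorem~\ref{th::longRainbowPathPseudo} by Chernoff plus a union bound. However, there is a genuine flaw in your execution. You claim that $\mathbb{G}(n,K/n)$ is a.a.s.\ $(p,\beta)$-jumbled with $\beta=c\sqrt{pn}=c\sqrt{K}=O(1)$ for \emph{every} pair $X,Y\subseteq V(G)$. This is false in the sparse regime $p=K/n$: take $X=\{v\}$ and $Y=N_G(v)$ for a vertex $v$ of maximum degree $\Delta$. Then $e_G(X,Y)=\Delta$ while $p|X||Y|=K\Delta/n=o(1)$, so jumbledness forces $\beta\geq(1-o(1))\sqrt{\Delta}$; since a.a.s.\ $\Delta=\Theta(\ln n/\ln\ln n)$, no constant $\beta$ works. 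The error enters precisely where you wave at it: the Chernoff bound in the form $2\exp\{-t^2/(3\mu)\}$ with $t=\beta\sqrt{|X||Y|}$ and $\mu=p|X||Y|$ is only valid when the deviation $t$ is at most (a constant times) the mean $\mu$, i.e.\ when $|X||Y|\gtrsim\beta^2/p^2=c^2n^2/K$. For small sets the deviation vastly exceeds the mean, the tail bound degrades to roughly $\exp(-\Theta(t))$, which is not summable against $4^n$ pairs --- and, as the max-degree example shows, the event genuinely fails.

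The repair is small but necessary, and it changes what you may invoke. The proof of Theorem~\ref{th::longRainbowPathPseudo} uses jumbledness only to obtain the one-sided lower bound $e_G(X,Y)\geq pk^2-\beta k$ for \emph{disjoint} sets of size exactly $k=\eps n/2$. For such linear-sized sets the mean is $pk^2=K\eps^2n/4$ and the deviation $\beta k=c\sqrt{K}\,\eps n/2$ is at most the mean once $K\geq 4c^2/\eps^2$, so your Chernoff computation is valid there and gives failure probability $\exp(-c^2n/3)$ per pair, beating the union bound over at most $4^n$ pairs for $c$ large. So you should either (i) prove directly that a.a.s.\ $e_G(X,Y)\geq pk^2-\beta k$ for all disjoint $X,Y$ of size $k$ and observe that this is the only consequence of jumbledness used in the proof of Theorem~\ref{th::longRainbowPathPseudo} (rather than applying that theorem ``verbatim'' to a graph that is not in fact jumbled), or (ii) rerun the argument of Proposition~\ref{prop::longRainbowPath} directly on $\mathbb{G}(n,K/n)$. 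Your bookkeeping about the compatibility of the constants ($c$ absolute, $K\geq c^2D^2$ with $D=D(\alpha,\eps)$) is fine and carries over to the corrected version.
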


\begin{remark}
The fact that $\mathbb{G}(n, K/n)$ a.a.s. satisfies the discrepancy condition set in Theorem~\ref{th::longRainbowPathPseudo} follows by a standard application of Chernoff's bound and a union bound. One can also prove Corollary~\ref{cor::longRainbowPathGnp} directly (i.e., without relying on Theorem~\ref{th::longRainbowPathPseudo}), and such a proof avoids the use of Chernoff's bound.  
\end{remark}

\section{Rainbow Hamilton cycles in the perturbed model}

In this section we prove Theorem~\ref{th::rainbowHAM}. The main ingredients of our proof are Corollary~\ref{cor::longRainbowPathGnp}, a {\sl randomness shift} argument, taken from~\cite{KKS17}, which shifts randomness from the random perturbation to the seed, so to speak, and an absorbing structure.

\medskip
\noindent
{\bf Absorbers.} We commence with a description of the absorbing structure, which can be viewed as a rainbow variant of the one used in~\cite{HMMMO} (see also~\cite{BMPP18}). Let $H_1$ and $H_2$ be edge-disjoint graphs on the same vertex-set and let $H = H_1 \cup H_2$. Let $\psi : E(H) \to \mathbb{N}$ be an edge-colouring. Let $P = p_1 p_2 \ldots p_{\ell}$ be a path in $H_1$ which is rainbow under $\psi$ and let $A = A(P) = \{\psi(p_i p_{i+1}) : 1 \leq i \leq \ell - 1\}$ be the set of colours seen along $P$ under $\psi$. Let $I = I(P) = \{p_{2i} : 1 \leq i \leq \ell/2\} \setminus \{p_{\ell}\}$. For any two vertices $u, v \in V(H)$, set 
$$ 
B(u,v) = \{x \in N_{H_2}(u) \cap I : N_P(x) \subseteq N_{H_2}(v)\}
$$ 
and put 
$$ 
B^r(u,v) = \{x \in B(u,v) : |(\{\psi(ux)\} \cup \{\psi(yv) : y \in N_P(x)\}) \setminus A| = 1 + |N_P(x)|\}.
$$ 

We use sets of the form $B^r(u,v)$ in order to extend a given rainbow path by absorbing an external vertex so that the resulting extension remains rainbow. If for a  vertex $v \in V(H) \setminus P$ there exists a vertex $p_j \in B^r(p_{\ell}, v)$ (note that $p_\ell$ is an end of $P$), then there is a rainbow path in $H$ which is strictly longer than $P$. To see this, let $p_j \in B^r(p_{\ell}, v)$. Then,
$p_j  \in N_{H_2}(p_{\ell}) \cap I$ and $p_{j-1}, p_{j+1} \in N_{H_2}(v)$. Moreover, 
$$ 
\{\psi(p_{j-1} v), \psi(p_{j+1} v), \psi(p_{\ell} p_j)\} \cap A = \emptyset, 
$$
and 
$$
|\{\psi(p_{j-1} v), \psi(p_{j+1} v), \psi(p_{\ell} p_j)\}| = 1 + |N_P(p_j)| = 3.
$$ 
Therefore, the path $p_1 \ldots p_{j-1} v p_{j+1} \ldots p_{\ell} p_j$ forms a rainbow path in $H$ with vertex-set $V(P) \cup \{v\}$. We say that $p_j$ was \emph{used to absorb} $v$.

\medskip 
\noindent
{\bf Randomness shift.} Next, we describe the \emph{randomness shift} argument. Let $H$ be a graph with vertex-set $[n]$ and let $R \sim \mathbb{G}(n,p)$. Suppose that a.a.s. $R$ contains certain substructures. Then we may assume that these substructures (or some corresponding vertex-sets) are themselves sampled uniformly at random. Indeed, $R$ can be generated as follows. First, generate a random graph $R' \sim \mathbb{G}(n,p)$ and then permute its vertex-set {\sl randomly};  denote the resulting graph by $R$. That is, we choose a permutation $\pi \in S_n$ uniformly at random and set $R = ([n], \{\pi(u) \pi(v) : uv \in E(R')\})$. The corresponding probability space coincides with $\mathbb{G}(n,p)$. In this manner, the aforementioned substructures of $R$ are sampled uniformly at random through $\pi$. 
Below we apply this argument to an almost-spanning rainbow path.

\medskip

We are now ready to prove our main result, namely, Theorem~\ref{th::rainbowHAM}.

\begin{proof} [Proof of Theorem~\ref{th::rainbowHAM}]
Let $d$ and $\alpha$ be as in the premise of the theorem and set $\eps = d^3/220$. Let $K = K(\alpha, \eps)$ be the constant whose existence is ensured by Corollary~\ref{cor::longRainbowPathGnp} and let $C = K+1$. We expose $R \sim \mathbb{G}(n, C/n)$ in two rounds, that is, $R = R_1 \cup R_2$ where $R_1 \sim \mathbb{G}(n, K/n)$ and $R_2 \sim \mathbb{G}(n, p)$ for $p$ which satisfies $1 - C/n = (1 - K/n)(1-p)$; note that $p \geq 1/n$.

We first expose the edges of $R_1$ and colour them uniformly at random with colours from $[(1 + \alpha) n]$; denote the resulting colouring by $\psi$. Set $\ell = (1 - \eps) n$ (which we assume is an integer) and let $P = p_1 p_2 \ldots p_{\ell}$ be a path in $R_1$ which is rainbow under $\psi$; such a path exists (a.a.s. in $R_1$) by Corollary~\ref{cor::longRainbowPathGnp}.

Next, we use the edges of $H \in \G_{d,n}$ in order to extend $P$ to a rainbow path on $n-2$ vertices. Let $I = \{p_{2i} : 1 \leq i \leq \ell/2\} \setminus \{p_{\ell}\}$. We begin by proving that, with respect to $H$, $P$ and $I$, the set $B(u,v)$ is large for every $u, v \in V(H)$; this is done without revealing the colours of the edges of $H$.

\begin{claim} \label{cl::LargeBuv}
Asymptotically almost surely $|B(u,v)| \geq d^3 n/110$ holds for every $u, v \in V(H)$.
\end{claim}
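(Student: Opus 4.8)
The plan is to exploit the \emph{randomness shift} argument so that the claim becomes a statement about a fixed graph $H$ against a uniformly random embedding of the path $P$. Since the rainbow path $P$ is produced inside $R_1$ and we subsequently relabel via a uniform $\pi \in S_n$ that is independent of $H$, we may assume that the vertex labels $p_1, \ldots, p_\ell$ of $P$ are placed uniformly at random on $[n]$ while the path structure (and hence $I$ and the path-neighbourhoods $N_P(\cdot)$) is fixed. The key observation is that $B(u,v)$ depends only on this embedding and on the adjacencies of $H$, and \emph{not} on any colouring; hence the claim can be analysed purely over the random placement, consistently with our promise not to reveal the colours of $E(H)$. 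Concretely, the relevant data is the collection of \emph{cherries} $(p_{2i-1}, p_{2i}, p_{2i+1})$ indexed by $x = p_{2i} \in I$, whose positions in $[n]$ are governed by $\pi$.

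Next I would estimate $\E_\pi |B(u,v)|$ by linearity of expectation. Fix $u, v \in V(H)$. For each $x = p_{2i} \in I$ we have $x \in B(u,v)$ precisely when $p_{2i} \in N_H(u)$ and $p_{2i-1}, p_{2i+1} \in N_H(v)$. Under the random placement the triple $(p_{2i-1}, p_{2i}, p_{2i+1})$ is a uniformly chosen ordered triple of distinct vertices, so, using $\delta(H) \geq dn$, the probability that all three required memberships hold is at least $d^3(1 - o(1))$ (the $1-o(1)$ absorbing the negligible without-replacement corrections). Summing over the $|I| = \ell/2 - 1$ cherries gives
$$
\E_\pi |B(u,v)| \;\geq\; \left(\frac{\ell}{2} - 1\right) d^3 (1 - o(1)) \;\geq\; \frac{(1-\eps) n}{2}\, d^3 (1 - o(1)),
$$
which, since $\eps = d^3/220$ is small, comfortably exceeds the target $d^3 n/110$ by a constant factor.

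Finally I would upgrade this first-moment bound to a high-probability statement via concentration for permutation-dependent random variables and then take a union bound. Viewing $|B(u,v)|$ as a function of $\pi$, I note that each vertex label occupies a single position of $P$ and therefore lies in at most two cherries (at most one as a centre and at most two as an arm), so transposing two labels alters $|B(u,v)|$ by at most a constant. McDiarmid's bounded-differences inequality for uniformly random permutations then yields $\P\!\left(|B(u,v)| < d^3 n/110\right) \leq \exp\{-\Omega(n)\}$, since the target lies a constant fraction below the mean and the required deviation is $\Theta(n)$ while the bounded-difference constant is $O(1)$. A union bound over the fewer than $n^2$ pairs $(u,v)$ then gives the claim asymptotically almost surely. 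The main obstacle is not the estimates themselves, which are routine, but setting up the randomness shift cleanly — in particular arguing that, after conditioning on the a.a.s.\ existence of $P$ in $R_1$, the embedding of $P$ may be treated as uniform and independent of $H$ — and verifying the bounded-difference constant that drives the concentration.
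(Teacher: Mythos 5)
Your proof is correct, and while it shares the paper's starting point --- the randomness-shift reduction to a uniformly random placement of the fixed path $P$ against the fixed graph $H$, with no colours revealed --- your concentration step is genuinely different from the paper's. The paper does not compute an expectation and apply a bounded-differences inequality; instead it reveals the images $\pi(p_1),\ldots,\pi(p_\ell)$ sequentially, first excludes (via Chernoff for the hypergeometric distribution) the bad event that more than half of $N_H(u)$ or $N_H(v)$ lands among the first $n/3$ positions, and then observes that for each $i\le n/12$ the cherry centred at $p_{4i}$ succeeds with conditional probability at least $d^3/9$ regardless of the outcomes of the other sampled cherries (the indices $4i$ are spaced so that the relevant triples of positions are disjoint); this yields stochastic domination of $|B(u,v)|$ by $\Bin(n/12,d^3/9)$ and hence the bound $d^3n/110$, which is deliberately far below the true mean. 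Your route --- linearity of expectation over all of $I$ giving $\E|B(u,v)|\gtrsim d^3n/2$, followed by McDiarmid's inequality for uniformly random permutations with bounded-difference constant $O(1)$ (each position lies in at most two cherries, so a transposition changes $|B(u,v)|$ by at most $4$) --- is sound, avoids the auxiliary events $A_u,A_v$ and the sparsification to every fourth cherry, and in fact proves the stronger bound $|B(u,v)|\ge (1-o(1))d^3n/2$; the price is invoking the permutation version of the bounded-differences inequality rather than only Chernoff bounds, and you should take care to state that version precisely, since the exponential tail $e^{-\Omega(d^6 n)}$ it provides is exactly what the union bound over the $O(n^2)$ pairs $(u,v)$ requires. (One trivial slip: a position of $P$ lies in at most one cherry as a centre \emph{or} at most two as an arm, never both, so your count of ``at most two cherries'' per position is right even though the parenthetical suggests three; this does not affect the argument.)
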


\begin{proof}
Fix some $u, v \in V(H)$. As explained above, we may assume that a random permutation $\pi : V(R_1) \to V(H)$ maps $P$ to a path $P'$. We assume that the images $\pi(p_1), \pi(p_2), \ldots, \pi(p_{\ell})$ are determined (randomly) first in this order, and then the images $\pi(w)$ are set for every $w \in V(R_1) \setminus V(P)$ in an arbitrary order. For every $1 \leq i \leq \ell$, let $X_i$ denote the indicator random variable for the event $\pi(p_i) \in N_H(u)$ and let $Y_i$ denote the indicator random variable for the event $\pi(p_i) \in N_H(v)$. For every $i$ such that $p_i \in I$, let $Z_i = Y_{i-1} X_i Y_{i+1}$. Then $Z_i$ is the indicator random variable for the event $\pi(p_i) \in B(u,v)$ and thus $|B(u,v)| = \sum Z_i$, where the sum is extended over all $1 \leq i \leq \ell$ for which $p_i \in I$. Let $A_u$ (respectively $A_v$) be the event that $|N_H(u) \cap \{\pi(p_1), \ldots, \pi(p_{n/3})\}| \geq |N_H(u)|/2$ (respectively $|N_H(v) \cap \{\pi(p_1), \ldots, \pi(p_{n/3})\}| \geq |N_H(v)|/2$). Note that $|N_H(u) \cap \{\pi(p_1), \ldots, \pi(p_{n/3})\}|$ (and its counterpart for $v$) is distributed hypergeometrically owing to the randomness shift argument. A straightforward application of Chernoff's bound for the hypergeometric distribution then shows that $\mathbb{P}(A_u \cup A_v) = o(1)$. Hence, for the remainder of the proof we will assume that $A^c_u \cap A^c_v$ holds.

Observe that 
$$
\mathbb{P}(Z_{4i} = 1) \geq \frac{|N_H(v)| - \sum_{j=1}^{4i-2} Y_j}{n} \cdot \frac{|N_H(u)| - \sum_{j=1}^{4i-1} X_j}{n} \cdot \frac{|N_H(v)| - \sum_{j=1}^{4i} Y_j}{n} \geq \frac{d^3}{9}.
$$
holds for every $1 \leq i \leq n/12$, regardless of the value of $Z_{4j}$ for any $j \neq i$. Therefore,
$$
\mathbb{P}(|B(u,v)| < d^3 n/110) \leq \mathbb{P}(\Bin(n/12, d^3/9) < d^3 n/110) < e^{- \Omega(d^3 n)},
$$
where the last inequality holds by a standard application of Chernoff's bound. Finally, a union bound over all pairs $u, v \in V(H)$ shows that the probability that there exists such a pair for which $|B(u,v)| < d^3 n/110$ is $o(1)$.
\end{proof}

Let $P_0 = P$ (formally, $P_0 = \pi(P)$, but we avoid using this more accurate notation for the sake of clarity of the presentation) and let $x, y, v_1, \ldots, v_s$ be the vertices of $V(H) \setminus V(P_0)$. We extend $P_0$ (via the edges of the seed $H$) by absorbing $v_1, \ldots, v_s$ one by one whilst keeping $p_1$ as one of the ends throughout. Assume that for some $i \geq 0$ the path $P_i$ with vertex-set $V(P_0) \cup \{v_1, \ldots, v_i\}$ has already been built and consider the subsequent extension of $P_i$ into $P_{i+1}$, obtained by absorbing $v_{i+1}$. For every $1 \leq j \leq i$, let $u_j$ be the vertex of $I$ that was used to absorb $v_j$. Let $z$ denote the endpoint of $P_i$ that is not $p_1$ (note that $z = p_{\ell}$ if $i = 0$ and $z = u_i$ otherwise). 

Let $B_i(z,v_{i+1}) = B(z,v_{i+1}) \setminus \{u_1, \ldots, u_i\}$ (in particular, $B_0(z,v_{i+1}) = B(z,v_{i+1})$), and note that  
\begin{equation} \label{eq:Bi}
|B_i(z,v_{i+1})| \geq |B(z,v_{i+1})| - i \geq d^3 n/110 - \eps n \geq d^3 n/220
\end{equation}
holds for any $0 \leq i \leq s$, by Claim~\ref{cl::LargeBuv} and the choice of $\eps$.
This removal of vertices that were previously used for absorption is crucial in two respects. First, absorbing triples cannot be reused. Second, and this unfolds more explicitly towards the end of the proof, there is a need to keep track over edges for which the random colouring $\psi$ has already been exposed and where randomness still lies, so to speak.  

For every vertex $p_j \in B_i(z,v_{i+1})$, we now expose the colours of the edges $p_j z$, $p_{j-1} v_{i+1}$, and $p_{j+1} v_{i+1}$ and extend the colouring $\psi$ to these edges; note that, crucially, the colours of $E_H(v_{i+1}, P_i) \cup E_H(z, I)$ were not previously exposed. An exception to this rule occurs if one of these edges is in $R_1$ as well. However, a standard calculation shows that a.a.s. $\Delta(R_1) = o(\ln n)$ implying that a.a.s. $e_{R_1}(v_{i+1}, P_i) + e_{R_1}(z, I) \leq \ln n$. If there exists a vertex $p_j \in B_i(z,v_{i+1}) \cap B^r(z,v_{i+1})$, then it can be used to absorb $v_{i+1}$ as explained above. The probability that no such vertex exists is at most
$$
\left(1 - \left(\frac{\alpha}{1 + \alpha} \right)^3\right)^{|B_i(z,v_{i+1})| - \ln n} \leq e^{- \alpha^3 d^3 n/230} = o(1/n).
$$    
Since this holds for every $0 \leq i \leq s$, a union bound shows that the probability that we fail to absorb at least one of the vertices $v_1, \ldots, v_s$ is o(1).

\medskip

Denote the ends of the resulting rainbow path $P_s$ by $p_1$ and $p_{n-2}$. We now use the edges of $R_2$ in order to extend $P_s$ into a rainbow Hamilton cycle. Let $X = B_s(p_1, x)$ and let $Y = B_s(p_{n-2}, y)$. The sizes of these sets are captured by~\eqref{eq:Bi}. Note that the colours of $E_H(\{x,y\}, V(H))$ were not yet exposed. Similarly, the colours of $E_H(p_1, X) \cup E_H(p_{n-2}, Y)$ were not yet exposed. Indeed, neither $X$ nor $Y$ meet the set $\{u_1, \ldots, u_s\}$, as by definition of the $B_i$-sets, members of $\{u_1, \ldots, u_s\}$ are repeatedly removed. Moreover, as the $B_i$-sets start from sets that do not meet the set $\{v_1, \ldots, v_s\}$ and in subsequent absorption steps are only refined, neither $X$ nor $Y$ meet the set $\{v_1, \ldots, v_s\}$. The claim follows by observing that throughout the absorption process the sole edges of $H$ whose colour is exposed are incident with $\{p_{\ell}, u_1, \ldots, u_s\} \cup \{v_1, \ldots, v_s\}$. Other relevant edges whose colour was already exposed, are the edges of $E_{R_1}(X, Y)$ and the edges of $E_{R_1}(x, P_s) \cup E_{R_1}(y, P_s) \cup E_{R_1}(p_1, X) \cup E_{R_1}(p_{n-2}, Y)$. A standard application of Chernoff's bound and a union bound over all pairs of sets of appropriate sizes shows that a.a.s. $e_{R_1}(X,Y) \leq 2 K |X||Y|/n = o(|X||Y|)$. Moreover, a standard calculation shows that a.a.s. $e_{R_1}(x, P_s) + e_{R_1}(y, P_s) + e_{R_1}(p_1, X) + e_{R_1}(p_{n-2}, Y) \leq \ln n$.  
 




Let $X' := \{p_i \in X : \{p_i p_1, p_{i-1} x, p_{i+1} x\} \cap E(R_1) = \emptyset\}$ and $Y' := \{p_i \in Y : \{p_i p_{n-2}, p_{i-1} y, p_{i+1} y\} \cap E(R_1) = \emptyset\}$; as noted in the preceding paragraph $|X'| \geq |X| - \ln n$ and $|Y'| \geq |Y| - \ln n$ hold asymptotically almost surely. Expose the edges of $R_2$ with one endpoint in $X'$ and the other in $Y'$ (that are not edges of $R_1$) and extend the colouring $\psi$ to these edges, and to the edges in $E_H(x, P_s) \cup E_H(y, P_s) \cup E_H(p_1, X') \cup E_H(p_{n-2}, Y')$ (that are not edges of $R_1$). If there exists an edge $p_i p_j \in E_{R_2}(X', Y') \setminus E(R_1)$ such that 
$$
p_i \in B^r(p_1, x),\;\; p_j \in B^r(p_{n-2}, y),\;\; \psi(p_i p_j) \notin \{\psi(p_t p_{t+1}) : 1 \leq t \leq n-3\},  
$$
and 
$$
|\{\psi(p_i p_j), \psi(p_1 p_i), \psi(x p_{i-1}), \psi(x p_{i+1}), \psi(p_{n-2} p_j), \psi(y p_{j-1}), \psi(y p_{j+1})\}| = 7,
$$ 
then (assuming without loss of generality that $i < j$) the sequence 
$$
p_i p_1 \ldots p_{i-1} x p_{i+1} \ldots p_{j-1} y p_{j+1} \ldots p_{n-2} p_j p_i
$$ 
forms a Hamilton cycle of $H \cup R$ which is rainbow under $\psi$. 

Since $P_s$ is coloured using $n-3$ colours, the colours for the aforementioned seven edges (assuming $p_i p_j$ exists) can be chosen from a set of at least $\alpha n$ colours. The probability that such an edge $p_i p_j$ does not exist in $R_2$ or that it does exist in $R_2$ and a colour clash occurs along the aforementioned seven edges as detailed above, is at most 
$$
\left(1 - \frac{O_\alpha(1)}{n}  \right)^{|X'| |Y'| - |X' \cap Y'|^2/2 - e_{R_1}(X', Y')} \leq \left(1 - \frac{O_\alpha(1)}{n}  \right)^{|X'| |Y'|/3} \leq e^{- \Omega_{\alpha,d}(n)} = o(1),
$$
where the second inequality holds by~\eqref{eq:Bi}.
\end{proof}

\bibliographystyle{amsplain}
\bibliography{lit}

\end{document}